\numberwithin{equation}{section}
\newcommand*{\isoarrow}[1]{\arrow[#1,"\rotatebox{90}{\(\sim\)}"
]}
\tikzset{commutative diagrams/.cd,
mysymbol/.style={start anchor=center,end anchor=center,draw=none}}
\newcommand{\Z}{\mathbb Z}
\newcommand{\A}{\mathbb A}
\newcommand{\C}{\mathbb C}
\newcommand{\Q}{\mathbb Q}
\newcommand{\DDT}{\mathsf{DT}}
\newcommand{\PPT}{\mathsf{PT}}
\DeclareMathOperator{\Quot}{Quot}
\DeclareMathOperator{\Supp}{Supp}
\DeclareMathOperator{\Hilb}{Hilb}
\DeclareMathOperator{\Chow}{Chow}
\DeclareMathOperator{\Jac}{Jac}
\DeclareMathOperator{\Bl}{Bl}
\DeclareMathOperator{\Spec}{Spec\,}
\DeclareMathOperator{\dd}{d}
\DeclareMathOperator{\Sym}{Sym}
\DeclareMathOperator{\Ext}{Ext}
\newtheoremstyle{thm} 
        {4mm}
        {4mm}
        {\slshape}
        {4mm}
        {\scshape}
        {.}
        {1mm}
        {}
\theoremstyle{thm}
\newtheorem{prop}{Proposition}[section]
\newtheorem*{teo*}{Theorem}
\newtheorem{lemma}[prop]{Lemma}
\newtheorem{corollary}[prop]{Corollary}
\newtheorem{theorem}{Theorem}
\theoremstyle{definition}
\newtheorem{example}[prop]{Example}
\newtheorem{definition}[prop]{Definition}
\newtheorem{remark}[prop]{Remark}
\title{The DT/PT correspondence for smooth curves}
\author{Andrea T.~Ricolfi}
\email{atricolfi@mpim-bonn.mpg.de}
\keywords{Donaldson--Thomas invariants, Hilbert--Chow morphism.}
\subjclass[2010]{Primary 14N35; Secondary 14C05.}
\begin{document}

\begin{abstract}
We show a version of the DT/PT correspondence relating local curve counting invariants, encoding the contribution of a fixed smooth curve in a Calabi--Yau threefold. We exploit a local study of the Hilbert--Chow morphism about the cycle of a smooth curve. We determine, via Quot schemes, the global Donaldson--Thomas theory of a general Abel--Jacobi curve of genus $3$.
\end{abstract}

\maketitle

\tableofcontents

\section{Introduction}
\label{intro}
Let $Y$ be a smooth, projective Calabi--Yau threefold. The Donaldson--Thomas (DT) invariants of $Y$ are enumerative invariants attached to the Hilbert scheme 
\[
I_m(Y,\beta)=\Set{Z\subset Y|\chi(\mathscr{O}_Z)=m,\,[Z]=\beta},
\]
viewed as a moduli space of ideal sheaves \cite{ThomasThesis}. These numbers are insensitive to small deformations of the complex structure of $Y$, but they do change when we perturb the stability condition that defines them: the rules that govern these changes are the so called \emph{wall-crossing formulas}. It is in this spirit that one can interpret the ``DT/PT correspondence'', an equality of generating functions
\begin{equation}\label{generalDTPT}
\DDT_\beta(q)=\DDT_0(q)\cdot \PPT_\beta(q)
\end{equation}
first conjectured in \cite{PT} and later proved by Bridgeland \cite{Bri} and Toda \cite{Toda1}. 
The left hand side of \eqref{generalDTPT} is the Laurent series encoding 
Donaldson--Thomas invariants of $Y$ in the class $\beta\in H_2(Y,\mathbb Z)$, whereas $\PPT_\beta$ encodes the Pandharipande--Thomas (PT) 
invariants of $Y$, defined through the moduli space of stable pairs \cite{PT,BPS},
\[
P_m(Y,\beta)=\Set{(F,s)|\chi(F)=m,\,[\Supp F]=\beta}.
\]
Recall that a pair $(F,s)$, consiting of a one-dimensional coherent sheaf $F$ and a section $s\in H^0(Y,F)$, is said to be \emph{stable} when 
$F$ is pure and the cokernel of $s$ is zero-dimensional. Finally, the formula \cite{BFHilb,JLI,LEPA}
\[
\DDT_0(q)=M(-q)^{\chi(Y)}
\]
determines the zero-dimensional DT theory of $Y$. Here $M(q)=\prod_{k>0}\,(1-q^k)^{-k}$ is the MacMahon function.

\subsection{Main result}
We prove a variant of~\eqref{generalDTPT} in this note. 
Let $C\subset Y$ be a smooth curve of genus $g$ embedded in class $\beta$. For integers $n\geq 0$, we define ``local'' DT invariants
\[
\DDT_{n,C}=\int_{I_n(Y,C)}\nu_I\,\dd\chi
\]
where $\nu_I:I_{1-g+n}(Y,\beta)\rightarrow \Z$ is the Behrend function \cite{Beh} on the Hilbert scheme and $I_n(Y,C)\subset I_{1-g+n}(Y,\beta)$ is the closed subset parametrizing subschemes $Z\subset Y$ containing $C$. For instance, a generic point of $I_n(Y,C)$ represents a subscheme consisting of $C$ along with $n$ distinct points in $Y\setminus C$.  
Similarly, we consider $P_n(Y,C)\subset P_{1-g+n}(Y,C)$, the closed subset parametrizing stable pairs $(F,s)$ such that $\Supp F=C$.
The local invariants on the stable pair side
\[
\PPT_{n,C}=\int_{P_n(Y,C)}\nu_P\,\dd\chi
\]
have been studied in \cite{BPS}. By smoothness of $C$, the local moduli space $P_n(Y,C)$ can be identified with the symmetric product $\Sym^nC$. Let us form the generating functions
\[
\DDT_C(q)=\sum_{n\geq 0}\DDT_{n,C}q^{1-g+n}, \qquad \PPT_C(q)=\sum_{n\geq 0}\PPT_{n,C}q^{1-g+n}.
\]
We say that the DT/PT correspondence holds for $C$ if one has
\begin{equation}\label{eqn:localdtpt}
\DDT_C(q)=\DDT_0(q)\cdot \PPT_C(q).
\end{equation}
We can view \eqref{eqn:localdtpt} as a wall-crossing formula relating the local curve counting invariants attached to $C$. 

Let $n_{g,C}$ be the BPS number of $C\subset Y$ \cite{BPS}. Using the known value of the PT side \cite[Section $3$]{BPS},
\[
\PPT_C(q)=n_{g,C}\cdot q^{1-g}(1+q)^{2g-2},
\]
we proved that the DT/PT correspondence \eqref{eqn:localdtpt} holds for $C$ smooth and \emph{rigid} in \cite[Section $5$]{LocalDT}. The goal of this note is to extend the result to all smooth curves. 

\begin{theorem}\label{thm:dtpt}
Let $Y$ be a smooth, projective Calabi--Yau threefold, $C\subset Y$ a smooth curve. Then the DT/PT correspondence \eqref{eqn:localdtpt} holds for $C$.
\end{theorem}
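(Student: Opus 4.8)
The plan is to leave the stable pair side untouched and to compute the DT side. By \cite[Section~3]{BPS} we already know $\PPT_C(q)=n_{g,C}\,q^{1-g}(1+q)^{2g-2}$; moreover, since $C$ is smooth, the local PT moduli space near the pair $(\mathscr O_C,1)$ is identified with the space $\mathcal M$ of curves deforming $C$ inside $Y$ (an open subset of the Chow variety about the cycle $[C]$, which carries a symmetric obstruction theory because $\chi(N_{C/Y})=0$ and $N^\vee_{C/Y}\otimes\det N_{C/Y}\cong N_{C/Y}$), so that $n_{g,C}=\PPT_{0,C}=\nu_{\mathcal M}([C])$. The first step on the DT side is to identify $I_n(Y,C)$ with the Quot scheme $\Quot_Y(I_C,n)$ of length-$n$ quotients of the ideal sheaf $I_C$: a subscheme $Z\supseteq C$ with $\chi(\mathscr O_Z)=1-g+n$ corresponds to the surjection $I_C\twoheadrightarrow I_C/I_Z$, and the condition $[Z]=\beta=[C]$ forbids $Z$ to acquire any embedded $1$-dimensional component, so $I_C/I_Z$ is genuinely $0$-dimensional of length $n$; equivalently, $Z$ is $C$ decorated by a length-$n$ punctual scheme and the Cohen--Macaulayfication of $Z$ is $C$. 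Thus $I_n(Y,C)$ is a projective scheme of finite type carrying the restriction $\nu_I$ of the Behrend function of the ambient Hilbert scheme.

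Next I would peel off the contribution of points lying away from $C$. Sending a quotient to the $0$-cycle of its support refines $\Quot_Y(I_C,n)$ into locally closed strata according to how the length $n$ is split between $C$ and $Y\setminus C$; over $Y\setminus C$ the sheaf $I_C$ is trivial, so the off-$C$ part of a stratum is a punctual Hilbert scheme of the smooth quasi-projective threefold $Y\setminus C$, while the on-$C$ part is $\Quot^{l}_C(I_C)$, the locus of quotients supported set-theoretically on $C$. Here the Calabi--Yau threefold hypothesis enters: the Hilbert scheme is Zariski-locally a critical locus (a $d$-critical locus in Joyce's sense), and near a point $[Z]$ this critical chart splits as a product of a chart governing the geometry near $C$ and a chart governing the far-away points, since these deformations decouple. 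Combined with multiplicativity of the Behrend function under products and the evaluation $\sum_k e_\nu(\Hilb^k X)\,q^k=M(-q)^{\chi(X)}$ for a smooth quasi-projective threefold $X=Y\setminus C$ \cite{BFHilb,JLI,LEPA}, this gives
\[
\DDT_C(q)=M(-q)^{\chi(Y\setminus C)}\cdot q^{1-g}\sum_{l\ge 0}e_{\nu_I}\!\big(\Quot^{l}_C(I_C)\big)\,q^{l}=M(-q)^{\chi(Y)+2g-2}\cdot q^{1-g}G(q),
\]
so that Theorem~\ref{thm:dtpt} reduces to the identity $G(q)=n_{g,C}\,M(-q)^{2-2g}(1+q)^{2g-2}$.

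For this identity I would carry out the announced local study of the Hilbert--Chow morphism about the cycle of $C$. Each subscheme parametrized by $\Quot^{l}_C(I_C)$ has Cohen--Macaulayfication $C$, hence lies over the point $[C]$ of the Chow variety; the point is to show that near this locus $I_{1-g+l}(Y,\beta)$ is fibered over $\mathcal M$ with fibre over $[C]$ equal to $\Quot^{l}_C(I_C)$, and that the $d$-critical structures are compatible along this fibration. This forces $\nu_I$, restricted to $\Quot^{l}_C(I_C)$, to factor as $\nu_{\mathcal M}([C])\cdot\nu^{\mathrm{fib}}=n_{g,C}\cdot\nu^{\mathrm{fib}}$, where $\nu^{\mathrm{fib}}$ is the Behrend function of the fibre. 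But $\Quot^{l}_C(I_C)$ endowed with $\nu^{\mathrm{fib}}$ is precisely what occurs when $C$ is rigid — then $\mathcal M$ is a reduced point and $I_{1-g+l}(Y,\beta)$ coincides locally with its fibre — so the rigid case already settled in \cite[Section~5]{LocalDT} gives $\sum_l e_{\nu^{\mathrm{fib}}}(\Quot^{l}_C(I_C))\,q^l=M(-q)^{2-2g}(1+q)^{2g-2}$. Therefore $G(q)=n_{g,C}\,M(-q)^{2-2g}(1+q)^{2g-2}$ and, substituting back,
\[
\DDT_C(q)=M(-q)^{\chi(Y)+2g-2}\cdot q^{1-g}\cdot n_{g,C}\,M(-q)^{2-2g}(1+q)^{2g-2}=M(-q)^{\chi(Y)}\,\PPT_C(q).
\]

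The main obstacle is exactly the compatibility of the $d$-critical structures along the Hilbert--Chow fibration over $\mathcal M$ in the non-rigid case not covered by \cite{LocalDT}: when $C$ deforms, an embedded point lying on $C$ need not remain on $C$, so the support stratification of $\Quot^{l}_C(I_C)$ does not a priori split off the deformation parameters of $C$, and one must control how the tangent and obstruction spaces of $\mathcal M$ interact with the fibrewise Behrend count. I expect this to be handled via the explicit local model, namely the total space of the normal bundle $N_{C/Y}$, on which $I_C$ has a transparent graded description; and it is here that the explicit computation, carried out via Quot schemes, of the global Donaldson--Thomas theory of a general genus-$3$ Abel--Jacobi curve $C\subset\Jac(C)$ — whose deformation space is the whole Jacobian — serves both as the prototypical non-rigid example and as a check on the argument.
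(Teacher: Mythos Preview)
Your overall strategy is the paper's: identify $I_n(Y,C)$ with the Quot scheme, show the Hilbert scheme is locally a product of the deformation space of $C$ with this fibre, factor the Behrend function accordingly as $n_{g,C}$ times the intrinsic Behrend function of the fibre, and then invoke the rigid computation from \cite{LocalDT}. Two differences are worth noting.

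First, the on-$C$/off-$C$ stratification is an unnecessary detour. The paper treats the whole fibre $I_n(Y,C)=\Quot_n(\mathscr I_C)$ at once: once one has the local product structure $I_{1-g+n}(Y,\beta)\cong I_n(Y,C)\times\Chow_1(Y,\beta)$ near the fibre (Theorem~\ref{thm:local}), the Behrend identity $\nu_I|_{I_n(Y,C)}=\nu(\mathscr I_C)\cdot\nu_{I_n(Y,C)}$ follows immediately from local-analytic computability of $\nu$, and then \cite[Thm.~5.2]{LocalDT} says precisely that $\DDT_{n,C}=\nu(\mathscr I_C)\cdot\tilde\chi(\Quot_n(\mathscr I_C))$ is equivalent to \eqref{eqn:localdtpt}. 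Peeling off the far-away points via $M(-q)^{\chi(Y\setminus C)}$ would itself require a product-type argument of the same strength, so nothing is saved.

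Second, and this is where your proposal is genuinely incomplete, the paper's resolution of your ``main obstacle'' is neither via $d$-critical charts nor via the single normal bundle $N_{C/Y}$. It is a \emph{relative} tubular neighborhood theorem (Lemma~\ref{lemma:loctriv}): any closed immersion of $S$-smooth schemes is, locally analytically over $S$, the standard linear one $\C^d\times S\hookrightarrow\C^n\times S$. One applies this with $S=U$ an open neighbourhood of $[C]$ in $I_{1-g}(Y,\beta)\cong\Chow_1(Y,\beta)$ and with the universal curve $\mathscr C\hookrightarrow Y\times U$. This trivializes the \emph{embedding} of the universal curve, which, as you correctly observe, is what matters since the Quot scheme depends on the embedding and not just the abstract curve; modeling a single $C\subset Y$ on $N_{C/Y}$ would not suffice. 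With the embedding trivialized one reads off the honest local-analytic product decomposition of Theorem~\ref{thm:local}, and multiplicativity of $\nu$ is then just \cite[Prop.~4.22]{Beh}. No Calabi--Yau hypothesis or critical-locus structure enters this step; the CY condition is used only through the input from \cite{LocalDT}. Finally, the Abel--Jacobi computation in Section~\ref{sec:ajcurves} is an \emph{application} of the Quot-scheme identification (Theorem~\ref{thm:fibre}), not a test case feeding back into the proof.
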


In fact, the conclusion of the theorem holds for all Cohen--Macaulay curves, by recent work of Oberdieck \cite{Ob1}. While he works with motivic Hall algebras, our method involves a local study of the Hilbert--Chow morphism, and builds upon previous calculations \cite{LocalDT}, especially the weighted Euler characteristic of the Quot scheme $\Quot_n(\mathscr I_C)$, as we explain in Section \ref{section:dtpt}.

\medskip
The local invariants do not depend on the scheme structure one may put on $I_n(Y,C)$ and $P_n(Y,C)$. However, on the DT side, we will exploit the Hilbert--Chow morphism to endow $I_n(Y,C)$ with a natural scheme structure, and we will prove that it agrees with the Quot scheme studied in \cite{LocalDT}.
So, in the local theory, we can think of the moduli spaces
\[
\Quot_n(\mathscr I_C)\qquad\textrm{and}\qquad \Sym^nC
\]
as living on opposite sides of the wall separating DT and PT theory from one another.

\medskip
\noindent
\emph{Conventions.}
All schemes are defined over $\C$. The Calabi--Yau condition for us is simply the existence of a trivialization of the canonical line bundle. The Hilbert--Chow morphism $\Hilb_r(X/S)\rightarrow \Chow_r(X/S)$ is the one constructed by D.~Rydh in \cite{Rydh1}.

\section{The DT/PT correspondence}\label{section:dtpt}
In this section we outline our strategy to deduce Theorem \ref{thm:dtpt}.

\medskip
Let $Y$ be a smooth projective variety, not necessarily Calabi--Yau. We consider the Hilbert--Chow morphism
\begin{equation}\label{hc1638}
\Hilb_1(Y)\rightarrow \Chow_1(Y)
\end{equation}
constructed in \cite{Rydh1}, sending a $1$-dimensional subscheme of $Y$ to its fundamental cycle.  We recall its definition in Section \ref{sec:fib}.
Let $I_{m}(Y,\beta)\subset \Hilb_1(Y)$ be the component parametrizing subschemes $Z\subset Y$ such that 
\[
\chi(\mathscr{O}_Z)=m\in\Z,\qquad [Z]=\beta\in H_2(Y,\Z).
\]
Similarly, we let $\Chow_1(Y,\beta)\subset \Chow_1(Y)$ be the component parametrizing $1$-cycles of degree $\beta$. Then \eqref{hc1638} restricts to a morphism
\[
\mathsf h_m:I_{m}(Y,\beta)\rightarrow \Chow_1(Y,\beta).
\]

\begin{definition}\label{def:fixedcurve}
Fix an integer $n\geq 0$. For a Cohen--Macaulay curve $C\subset Y$ of arithmetic genus $g$ embedded in class $\beta$, we let
\[
I_n(Y,C)\subset I_{1-g+n}(Y,\beta)
\]
denote the scheme-theoretic fibre of $\mathsf h_{1-g+n}$, over the cycle of $C$.
\end{definition}

\begin{remark}\label{rmk:hilbchow}
We will use that \eqref{hc1638} is an isomorphism around normal schemes, at least in characteristic zero \cite[Cor.~$12.9$]{Rydh1}. Thus, for a smooth curve $C\subset Y$, we will identify Chow with Hilb locally around the cycle $[C]\in \Chow_1(Y)$ and the ideal sheaf $\mathscr I_C\in \Hilb_1(Y)$.
For this reason, we do not need the representability of the global Chow functor, as around the point $[C]\in \Chow_1(Y,\beta)$ we can work with the Hilbert scheme $I_{1-g}(Y,\beta)$ instead. 
\end{remark} 

Consider the Quot scheme 
\[
\Quot_n(\mathscr I_C)
\]
parametrizing quotients of length $n$ of the ideal sheaf $\mathscr I_C\subset \mathscr{O}_Y$. 
We proved in \cite[Lemma~$5.1$]{LocalDT} that the association $[\theta:\mathscr I_C\twoheadrightarrow \mathscr E]\mapsto \ker\theta$ defines a closed immersion
\begin{equation}\label{closed918361}
\Quot_n(\mathscr I_C)\hookrightarrow I_{1-g+n}(Y,\beta).
\end{equation}
More precisely, for a scheme $S$, an $S$-valued point of the Quot scheme is a flat quotient $\mathscr E=\mathscr I_{C\times S}/\mathscr I_Z$, and in the short exact sequence
\[
0\rightarrow \mathscr E\rightarrow \mathscr O_Z\rightarrow \mathscr O_{C\times S}\rightarrow 0
\]
over $Y\times S$, the middle term is $S$-flat, so $Z$ defines an $S$-point of $I_{1-g+n}(Y,\beta)$.
The $S$-valued points of the image of \eqref{closed918361} consist precisely of those flat families $Z\subset Y\times S\rightarrow S$ such that $Z$ contains $C\times S$ as a closed subscheme. This will be used implicitly in the proof of Theorem \ref{thm:fibre}.

The schemes $I_n(Y,C)$ and $\Quot_n(\mathscr I_C)$ have the same $\C$-valued points: they both parametrize subschemes $Z\subset Y$ consisting of $C$ together with ``$n$ points'', possibly embedded.
The first step towards Theorem \ref{thm:dtpt} is the following result, whose proof is postponed to the next section.

\begin{theorem}\label{thm:fibre}
Let $Y$ be a smooth projective variety, $C\subset Y$ a smooth curve of genus $g$. Then $I_n(Y,C)=\Quot_n(\mathscr I_C)$ as subschemes of $I_{1-g+n}(Y,\beta)$.
\end{theorem}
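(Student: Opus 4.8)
The plan is to compare the two closed subschemes of $H:=I_{1-g+n}(Y,\beta)$ through their functors of points. By the discussion around \eqref{closed918361}, an $S$-valued point of $\Quot_n(\mathscr I_C)$ is a flat family $Z\subset Y\times S$ of subschemes with $\chi(\mathscr O_{Z_s})=1-g+n$, $[Z_s]=\beta$, for which $C\times S\subseteq Z$, whereas an $S$-valued point of $I_n(Y,C)=\mathsf h_{1-g+n}^{-1}([C])$ is such a family whose relative fundamental cycle is the constant family $[C]\times S$. Thus the theorem is equivalent to the assertion that, for any such $Z$, one has $C\times S\subseteq Z$ if and only if the relative cycle of $Z/S$ equals $[C]\times S$. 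One implication is soft: if $C\times S\subseteq Z$, then $\mathscr I_{(C\times S)/Z}=\ker(\mathscr O_Z\twoheadrightarrow\mathscr O_{C\times S})$ is $S$-flat of relative dimension $0$, so $Z$ differs from $C\times S$ only by an $S$-flat sheaf of smaller relative dimension; since Rydh's relative $1$-cycle is additive and insensitive to such modifications, the cycle of $Z/S$ is that of $C\times S/S$, namely $[C]\times S$. Applied to the universal family over $\Quot_n(\mathscr I_C)$, this says that $\mathsf h_{1-g+n}$ restricts on $\Quot_n(\mathscr I_C)\subset H$ to the constant morphism with value $[C]$, so by the universal property of the scheme-theoretic fibre the closed immersion \eqref{closed918361} factors through a closed immersion $\Quot_n(\mathscr I_C)\hookrightarrow I_n(Y,C)$.

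For the reverse inclusion I would localise around $[C]$: by Remark \ref{rmk:hilbchow}, Rydh's theorem identifies a neighbourhood $U\ni[C]$ in $\Chow_1(Y,\beta)$ with a neighbourhood $V\ni[\mathscr I_C]$ in $I_{1-g}(Y,\beta)$, which after shrinking I may assume parametrises only smooth curves. Composing $\mathsf h_{1-g+n}$ with the isomorphism $U\xrightarrow{\ \sim\ }V$ gives a morphism $H^\circ\to V$ on the open set $H^\circ:=\mathsf h_{1-g+n}^{-1}(U)\subset H$, which contains $I_n(Y,C)$; transporting the universal curve over $V$, every $S$-valued point $Z$ of $H^\circ$ acquires a canonical $S$-flat family of smooth curves $\mathscr C_Z\subset Y\times S$ with the same relative cycle as $Z$, and this family is the constant one $C\times S$ precisely when $Z$ factors through $I_n(Y,C)$. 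The theorem then follows from the \textbf{Key Claim}: \emph{for every $S$-valued point $Z$ of $H^\circ$ one has $\mathscr C_Z\subseteq Z$, functorially in $S$.} Indeed, granting this, $Z\mapsto\bigl(\mathscr C_Z,\ \mathscr I_{\mathscr C_Z}\twoheadrightarrow\mathscr I_{\mathscr C_Z}/\mathscr I_Z\bigr)$ — the quotient being $S$-flat of length $n$ by the sequence $0\to\mathscr I_{\mathscr C_Z}/\mathscr I_Z\to\mathscr O_Z\to\mathscr O_{\mathscr C_Z}\to0$ — identifies $H^\circ$ with the relative Quot scheme over $V$ of the ideal sheaves of the curves in $V$, compatibly with the morphisms to $\Chow_1(Y,\beta)$; taking the fibre over $[\mathscr I_C]$ then identifies $I_n(Y,C)$ with $\Quot_n(\mathscr I_C)$ inside $H$, which is the theorem.

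To establish the Key Claim I would first treat $\C$-points, where it is automatic: if $Z\subset Y$ has fundamental cycle $[C']$ with $C'$ smooth, then $\mathscr I_{C'}$ is prime at each point and is a minimal prime of $\mathscr I_Z$, so $\mathscr I_Z\subseteq\mathscr I_{C'}$, i.e.\ $C'\subseteq Z$; furthermore $C'$ is the unique subscheme of $Z$ with $\chi=1-g$ and class $\beta$, and the pertinent Quot deformation space $\operatorname{Hom}_{\mathscr O_Z}(\mathscr I_{C'/Z},\mathscr O_{C'})$ vanishes, since a $0$-dimensional sheaf admits no nonzero map into the structure sheaf of a smooth curve. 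It follows that the flag Hilbert scheme of pairs $W\subseteq Z'$ with $\chi(\mathscr O_W)=1-g$, $\chi(\mathscr O_{Z'})=1-g+n$ and $[W]=[Z']=\beta$ maps to $H^\circ$ by a morphism which is readily seen to be a closed immersion (finite, bijective, with reduced points as scheme-theoretic fibres), and the Key Claim is the assertion that this closed immersion is an isomorphism. Reducing — by faithful flatness of completions and Krull's theorem — to $S=\Spec A$ with $A$ Artinian local and then to a square-zero extension $0\to I\to A\to A_0\to0$, this amounts to the vanishing of the obstruction to extending the closed immersion $C'\times\Spec A_0\hookrightarrow Z_0$ inside $Z$, a class in $\Ext^1_{\mathscr O_Y}(\bar{\mathscr E},\mathscr O_{C'})\otimes I$ with $\bar{\mathscr E}=\ker(\mathscr O_{\bar Z}\to\mathscr O_{C'})$ the length-$n$ sheaf on the closed fibre. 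I expect this to be the main obstacle, and the point at which one genuinely needs Rydh's \emph{construction} of the relative cycle rather than only the Hilbert--Chow morphism itself: the group $\Ext^1_{\mathscr O_Y}(\bar{\mathscr E},\mathscr O_{C'})$ is generically nonzero — it detects points of $Z$ colliding with $C'$ — so the vanishing is not formal. The mechanism I would aim to use is that $\mathscr C_Z$ is, by construction, the unique flat family of smooth curves sharing the relative cycle of $Z$ over all of $\Spec A$, so that the obstruction, measuring exactly a first-order discrepancy of relative cycles, must be zero; making this precise is the role of the local study of the Hilbert--Chow morphism in Section \ref{sec:fib}. Once the Key Claim holds, the identification of $H^\circ$ with the relative Quot scheme and the restriction to the fibre over $[\mathscr I_C]$ are purely formal.
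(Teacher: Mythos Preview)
Your overall architecture matches the paper's: both establish the closed immersion $\iota:\Quot_n(\mathscr I_C)\hookrightarrow I_n(Y,C)$ via the easy cycle computation, observe it is bijective on $\C$-points, and then reduce the surjectivity of $\iota(S)$ to the case of an Artinian local base. Your Key Claim, specialised to the fibre over $[C]$, is exactly what the paper proves: for $S$ a fat point and $Z\in I_n(Y,C)(S)$, one has $C\times S\subseteq Z$. Once this is known for all fat points, the formal criterion makes $\iota$ \'etale, hence an isomorphism; the paper argues this way as well.

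The gap is at the step you yourself flag as ``the main obstacle''. You set up the obstruction class in $\Ext^1_{\mathscr O_Y}(\bar{\mathscr E},\mathscr O_{C'})\otimes I$ and note correctly that this group does not vanish in general, then assert that the class itself must vanish because it ``measures a discrepancy of relative cycles''. This is the heart of the matter and you do not actually prove it; the sentence ``making this precise is the role of the local study of the Hilbert--Chow morphism'' defers exactly the content of the theorem. The paper bypasses obstruction theory entirely by an excision argument: letting $F\subset Y$ be the finite set supporting $\mathscr I_C/\mathscr I_{Z_0}$ and $V=X\setminus F$, one checks directly from Rydh's definition of the norm family via projections that $(\mathcal N_Z)_1\big|_V=\mathcal N_{Z\cap V}$, while by hypothesis this also equals $\mathcal N_{(C\times S)\cap V}$. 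Now $Z\cap V$ and $(C\times S)\cap V$ are both \emph{smooth} over $S$, so Rydh's equivalence between smooth relative cycles and smooth subschemes (Theorem~\ref{thm:fromrydh}) forces $Z\cap V=(C\times S)\cap V$ as subschemes. Taking closure recovers $C\times S\subseteq Z$, since $(C\times S)\cap V\subset C\times S$ is fibrewise dense. This is the missing idea: rather than showing an obstruction vanishes, one removes the zero-dimensional locus so that on the complement the relative cycle already determines the scheme, and then closes up.
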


As an application of Theorem \ref{thm:fibre}, in Section \ref{sec:ajcurves} we compute the reduced Donaldson--Thomas theory of a general Abel--Jacobi curve of genus $3$.

To proceed towards Theorem \ref{thm:dtpt}, we need to examine the local structure of the Hilbert scheme around subschemes $Z\subset Y$ whose maximal Cohen--Macaulay subscheme
$C\subset Z$ is smooth. The result, given below, will be proven in the next section. 

\begin{theorem}\label{thm:local}
Let $Y$ be a smooth projective variety, $C\subset Y$ a smooth curve of genus $g$. 
Then, locally analytically around $I_n(Y,C)$, the Hilbert scheme $I_{1-g+n}(Y,\beta)$ is isomorphic to $I_n(Y,C)\times \Chow_1(Y,\beta)$.
\end{theorem}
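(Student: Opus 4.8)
The plan is to show that, over an analytic neighbourhood of $I_n(Y,C)$, the Hilbert--Chow morphism $\mathsf h:=\mathsf h_{1-g+n}$ is a trivial fibration with fibre $I_n(Y,C)$. First I would use Remark \ref{rmk:hilbchow} to replace $\Chow_1(Y,\beta)$ around $o:=[C]$ by $B:=I_{1-g}(Y,\beta)$, which is projective and carries a universal subscheme $\mathcal C\subset Y\times B$, flat over $B$, with $\mathcal C_o=C$. Since $\mathcal O_{\mathcal C}$ is $B$-flat, the relative ideal sheaf $\mathscr I_{\mathcal C/Y\times B}$ is $B$-flat and its formation commutes with base change, with fibre $\mathscr I_{\mathcal C_b}$ over $b\in B$. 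Under this identification the morphism $\mathsf h\colon H:=I_{1-g+n}(Y,\beta)\to B$ satisfies $\mathsf h^{-1}(o)=I_n(Y,C)=:Q$, and $Q=\Quot_n(\mathscr I_C)$ by Theorem \ref{thm:fibre}; so it is enough to trivialise $\mathsf h$ analytically near $Q$.

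The second step is to identify $H$, near $Q$, with the relative Quot scheme $\mathcal Q:=\Quot_n\!\big(\mathscr I_{\mathcal C/Y\times B}/B\big)\to B$, whose fibre over $b$ is $\Quot_n(\mathscr I_{\mathcal C_b})$ and over $o$ is $Q$. Relativising \cite[Lemma~5.1]{LocalDT} (i.e. the closed immersion \eqref{closed918361}), the rule $[\mathscr I_{\mathcal C_b}\twoheadrightarrow\mathcal E]\mapsto(\text{kernel})$ defines a closed immersion $\Phi\colon\mathcal Q\hookrightarrow H$ over $B$, which over $o$ recovers \eqref{closed918361}, an isomorphism onto $Q$ by Theorem \ref{thm:fibre}. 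The content is that $\Phi$ is an isomorphism onto an open neighbourhood of $Q$ in $H$: every $Z$ close to $Q$ contains, scheme-theoretically, the curve $\mathcal C_{\mathsf h(Z)}$ carrying its fundamental cycle, and this bijection respects the full, possibly non-reduced, scheme structure. This is the relative analogue of Theorem \ref{thm:fibre}, which I would prove by rerunning its argument over the base $B$, working analytically near a point $p\in C$ where $(C\subset Y)$ is the standard pair $(\A^1\subset\A^3)$. \emph{This is the step I expect to be the main obstacle}, since one must carry scheme structures (nilpotents) through, not merely closed points.

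The third step trivialises $\mathcal Q\to B$ near $Q$. Since $C$ and $Y$ are smooth and $\mathcal C\to B$ is proper, the closed locus of $\mathcal C$ where the fibre fails to be smooth maps to a closed subset of $B$ missing $o$; hence over a neighbourhood $B'$ of $o$ the morphism $\mathcal C\to B'$ is flat with smooth one-dimensional fibres, so smooth. As $\mathscr I_C$ is locally generated by a regular sequence of length $2$, a straightening via the relative implicit function theorem shows that, analytically locally over $B'$ near each point of $C$, the pair $(\mathcal C\subset Y\times B')$ is isomorphic over $B'$ to the constant pair, by an isomorphism that is the identity on the central fibre; equivalently $\mathscr I_{\mathcal C/Y\times B}$ is analytically locally a pull-back of $\mathscr I_C$ from $Y$. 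Because a length-$n$ quotient is supported at finitely many points and $\mathcal Q$ decomposes, near any of its points, as a fibre product over $B$ of its ``supported near $p_i$'' factors, this local triviality of $\mathscr I_{\mathcal C}$ upgrades to an isomorphism between an analytic neighbourhood of any point of $Q$ in $\mathcal Q$ and the product of an analytic neighbourhood of the corresponding point of $Q$ with an analytic neighbourhood of $o$ in $B$, compatibly with the projections to $Q$ and to $B$. Combined with the second step this proves the theorem; Step 3 is routine once Step 2 is available, the scheme-theoretic identification of $H$ with the relative Quot scheme being the crux.
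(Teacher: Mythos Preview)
Your proposal is correct and follows the same overall strategy as the paper: identify $\Chow$ with $\Hilb$ near $[C]$ via Remark~\ref{rmk:hilbchow}, then use a local-analytic tubular neighborhood argument (your ``relative implicit function theorem'' is the paper's Lemma~\ref{lemma:loctriv}) to trivialize the embedding of the universal curve $\mathscr C\subset Y\times U$, and read off the local triviality of Hilbert--Chow.

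The one difference is organizational. You factor the argument through the relative Quot scheme $\mathcal Q=\Quot_n(\mathscr I_{\mathcal C}/B)$ and isolate the identification of $H$ with $\mathcal Q$ near $Q$ (your Step~2, a relative Theorem~\ref{thm:fibre}) as the crux, with the trivialization of $\mathcal Q\to B$ (Step~3) then routine. The paper instead merges your Steps~2 and~3 into a single direct construction: once the tubular neighborhood isomorphism $\varphi\colon V\,\widetilde{\to}\,V'$ over $U$ is in place, it writes down the map $(Z,u)\mapsto \mathscr C_u\cup\varphi(F_Z)$ from an open $A\subset I_n(Y,C)\times U$ to an open $B\subset\mathsf h^{-1}(U)$ and asserts this is an isomorphism, the inverse being transport back along $\varphi^{-1}$. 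The fact that this inverse is well-defined scheme-theoretically --- that nearby $Z'$ genuinely contain $\mathscr C_{\mathsf h(Z')}$ as a subscheme, not merely as a cycle --- is exactly your Step~2, but the paper treats it as implicit in the ambient isomorphism $\varphi$ rather than isolating it. So what you flag as the main obstacle is real but is absorbed into the direct construction; the tubular neighborhood lemma is doing the heavy lifting in both versions. Your decomposition is cleaner and more explicit about where the scheme structure is controlled; the paper's is shorter.
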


Roughly speaking, this means that the Hilbert--Chow morphism, locally about the cycle
\[
[C]\in \Chow_1(Y,\beta),
\] 
behaves like a fibration with typical fibre $I_n(Y,C)$. 
To obtain this, we first identify $\Chow$ with $\Hilb$ locally around $C$, cf.~Remark \ref{rmk:hilbchow}. We then need to trivialize the universal curve $\mathscr C\rightarrow \Hilb$, which can be done since smooth maps are analytically locally trivial (on the source). However, even if we had $\mathscr C = C\times \Hilb$, we would not be done: the fibre of Hilbert--Chow (which is the Quot scheme by Theorem \ref{thm:fibre}) depends on the embedding of the curve into $Y$, not just on the abstract curve. So to prove Theorem \ref{thm:local} we need to trivialize (locally) the embedding of the universal curve into $Y\times \Hilb$. This is taken care of by a local-analytic version of the tubular neighborhood theorem. After this step, Theorem \ref{thm:local} follows easily. 

\medskip
Granting Theorems \ref{thm:fibre} and \ref{thm:local}, we can prove the DT/PT correspondence for smooth curves. So now we assume $C$ is a smooth curve embedded in class $\beta$ in a smooth, projective Calabi--Yau threefold $Y$.

\begin{proof}[Proof of Theorem \ref{thm:dtpt}]
By \cite[Cor.~$12.9$]{Rydh1}, the Hilbert--Chow morphism 
\[
\mathsf h_{1-g}:I_{1-g}(Y,\beta)\rightarrow \Chow_1(Y,\beta)
\]
is (in characteristic zero) an isomorphism over the locus of normal schemes. Under this local identification, the cycle $[C]$ corresponds to the ideal sheaf $\mathscr I_C$. We let $\nu(\mathscr I_C)$ be the value of the Behrend function on $I_{1-g}(Y,\beta)$ at the point corresponding to $\mathscr I_C$.
Since the Behrend function can be computed locally analytically \cite[Prop.~$4.22$]{Beh}, Theorem \ref{thm:local} implies the identity
\[
\nu_I\big|_{I_n(Y,C)}=\nu(\mathscr I_C)\cdot \nu_{I_n(Y,C)},
\]
where $\nu_I$ is the Behrend function of $I=I_{1-g+n}(Y,\beta)$. After integration, we find
\[
\DDT_{n,C}=\nu(\mathscr I_C)\cdot \tilde\chi(I_n(Y,C)),
\]
where $\tilde\chi(I_n(Y,C))$, by Theorem \ref{thm:fibre}, agrees with the weighted Euler characteristic of the Quot scheme $\Quot_n(\mathscr I_C)$. But we proved in \cite[Thm.~$5.2$]{LocalDT} that the relation
\[
\DDT_{n,C}=\nu(\mathscr I_C)\cdot \tilde\chi(\Quot_n(\mathscr I_C))
\]
is equivalent to the local DT/PT correspondence \eqref{eqn:localdtpt}, so the theorem follows. 
\end{proof}

As observed in \cite{LocalDT}, the local DT/PT correspondence says that the local invariants are determined by the topological Euler characteristic of the corresponding moduli space, along with the BPS number of the fixed smooth curve $C\subset Y$. The latter can be computed as
\[
n_{g,C}=\nu(\mathscr I_C).
\]
For any integer $n\geq 0$, the formulas are
\begin{align*}
\DDT_{n,C}&=n_{g,C}\cdot (-1)^n\chi(I_n(Y,C)),\\
\PPT_{n,C}&=n_{g,C}\cdot (-1)^n\chi(P_n(Y,C)).
\end{align*}
In particular, the local invariants differ by the Euler characteristic of the corresponding moduli space by the \emph{same} constant.

\section{Proofs}\label{sec:proofs}
It remains to prove Theorems \ref{thm:fibre} and \ref{thm:local}.
For Theorem \ref{thm:fibre}, we need to review some definitions and results from \cite{Rydh1}.

\subsection{The fibre of Hilbert--Chow}\label{sec:fib}
Rydh has developed a powerful theory of \emph{relative cycles} and has defined a Hilbert--Chow morphism
\begin{equation}\label{hcmap3746}
\Hilb_r(X/S)\rightarrow \Chow_r(X/S)
\end{equation}
for every algebraic space $X$ locally of finite type over an arbitrary scheme $S$. For us $X$ is always a scheme, projective over $S$. 

We quickly recall the definition of \eqref{hcmap3746}. First of all, the Hilbert scheme $\Hilb_r(X/S)$ parametrizes $S$-subschemes of $X$ that are proper and of dimension $r$ over $S$, 
but not necessarily equidimensional, while $\Chow_r(X/S)$ parametrizes \emph{equidimensional}, proper relative cycles of dimension $r$.
We refer to \cite[Def.~$4.2$]{Rydh1} for the definition of \emph{relative cycles} on $X/S$. Cycles have a (not necessarily equidimensional) support, which 
is a locally closed subset $Z\subset X$. Rydh shows \cite[Prop.~$4.5$]{Rydh1} that if $\alpha$ is a relative cycle on $f:X\rightarrow S$ with support $Z$, 
then, for every $r\geq 0$, on the same family there is a unique \emph{equidimensional} relative cycle $\alpha_r$ with support 
\[
Z_r=\Set{x\in Z|\dim_xZ_{f(x)}=r}\subset Z.
\]
Cycles are called equidimensional when their support is equidimensional over the base.
The essential tool for the definition of~\eqref{hcmap3746} is the \emph{norm family}, defined by the following result.

\begin{theorem}[{\textrm{\cite[Thm.~$7.14$]{Rydh1}}}]
Let $X\rightarrow S$ be a locally finitely presented morphism, $\mathcal F$ a finitely presented $\mathscr{O}_X$-module which is flat over $S$. 
Then there is a canonical relative cycle $\mathcal N_{\mathcal F}$ on $X/S$, with support equal to $\Supp \mathcal F$. 
This construction commutes with arbitraty base change. When $Z\subset X$ is a subscheme which is flat and of finite presentation 
over $S$, we write $\mathcal N_Z=\mathcal N_{\mathscr{O}_Z}$.
\end{theorem}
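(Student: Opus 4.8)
The plan is to build the norm cycle $\mathcal N_{\mathcal F}$ first when the support $\Supp\mathcal F$ is \emph{finite} over $S$, and then bootstrap to the general finitely presented case by a Noether--normalization argument together with Rydh's valuative description of relative cycles.

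\textbf{Step 1 (finite support).} Assume $\mathcal F$ is finite and flat over $S$, locally of rank $d$; working locally on $X$ (the construction will glue, since multiplicities are local), take $X=\Spec A$ affine over $S=\Spec B$. The $\mathscr O_X$-module structure is a $B$-algebra map $A\to\operatorname{End}_B(\mathcal F)$, and composing it with the norm $\operatorname{End}_B(\mathcal F)\to B$ produces a multiplicative, degree-$d$ homogeneous polynomial law $A\to B$ in the sense of Roby; dually, a $B$-algebra homomorphism $\Gamma^d_B(A)\to B$ out of the $d$-th divided power algebra. This is precisely an $S$-point of the divided power scheme $\Gamma^d_S(X/S)$, which in characteristic zero coincides with $\Sym^d(X/S)$: an effective relative $0$-cycle $\mathcal N_{\mathcal F}$ of degree $d$ on $X/S$, whose support is $\Supp\mathcal F$ and whose restriction to a geometric fibre $X_s$ is the naive length cycle $\sum_x\operatorname{length}_{\mathscr O_{X_s,x}}(\mathcal F_s)_x\cdot[x]$ (with $\mathcal F_s$ the restriction of $\mathcal F$ to $X_s$). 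Since the norm, equivalently the characteristic polynomial, commutes with base change, so does the assignment $\mathcal F\mapsto\mathcal N_{\mathcal F}$.

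\textbf{Step 2 (general support).} In general, decompose $Z=\Supp\mathcal F$ into the locally closed loci $Z_r$ where the fibres of $Z\to S$ have local dimension $r$; it suffices to produce, for each $r$, the equidimensional relative cycle of dimension $r$ supported on $\overline{Z_r}$. By the coherence conditions defining relative cycles, this amounts to a compatible system of cycles after arbitrary base change, and Rydh's theory reduces this to base change along valuation rings plus checking the length formula on fibres. Over such a base, shrink $X$ around the generic points of the fibres of $Z_r$ and choose, using an $S$-ample line bundle, a projection $X\to\mathbb{A}^r_S$ that is finite on the support; after a further localization where the pushforward becomes finite flat, Step~1 (in its relative form over $\mathbb{A}^r_S$) yields a relative $0$-cycle there, which determines the desired relative $r$-cycle on $X/S$ (such a cycle being pinned down by its generic-fibre multiplicities). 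Independence of the auxiliary projection is checked by comparing two choices through a common refinement.

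\textbf{Step 3 (gluing, uniqueness, base change).} The local pieces glue, and $\mathcal N_{\mathcal F}$ is the unique relative cycle with the prescribed fibres, because its multiplicity at any point of a fibre equals the length of the corresponding localization of $\mathcal F$, an intrinsic number; compatibility with arbitrary base change then follows from Step~1 and from that of the reduction in Step~2. Finally one sets $\mathcal N_Z:=\mathcal N_{\mathscr O_Z}$ when $Z\subset X$ is flat and finitely presented over $S$. The main obstacle is Step~2: one must show that the naive fibrewise cycle is a genuine \emph{relative} cycle, i.e.\ that it is compatible with \emph{non-reduced} and \emph{ramified} base change and that the pieces built over different strata of $S$ fit together. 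Generic flatness controls only a dense open part of $S$, so a priori multiplicities could vary incoherently under specialization, and in positive characteristic one must moreover keep the divided power scheme $\Gamma^d$ in place of $\Sym^d$ for the norm construction to make sense at all. Carrying this out is the technical core of \cite{Rydh1}, which in the present note is invoked as \cite[Thm.~$7.14$]{Rydh1}.
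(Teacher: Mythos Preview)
The paper does not prove this theorem at all: it is quoted verbatim as \cite[Thm.~$7.14$]{Rydh1} and used as a black box in the construction of the Hilbert--Chow morphism. There is therefore no ``paper's own proof'' to compare your attempt against. You seem aware of this, since your final paragraph explicitly says that the technical core is carried out in \cite{Rydh1} and merely invoked here.

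That said, your sketch is a reasonable outline of the shape of Rydh's argument: the norm family is first built for finite flat sheaves via the multiplicative norm (equivalently, a point of the divided power scheme $\Gamma^d$), and the general case is handled by reducing to relative dimension zero through projections adapted to the support, exactly the mechanism the paper recalls just after stating the theorem. One small caution: in Step~2 you appeal to an $S$-ample line bundle to produce finite projections, but the statement only assumes $X\to S$ locally of finite presentation, not projective; Rydh's formalism instead builds the projections \'etale-locally on $X$ (as in diagram~\eqref{proj1} of the paper), which is what makes the construction work in the stated generality. But since the paper itself offers no proof, there is nothing further to compare.
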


The Hilbert--Chow functor \eqref{hcmap3746} is defined by $Z\mapsto (\mathcal N_Z)_r$.

\medskip
Even though we do not recall here the full definition of relative cycle, the main idea is the following. For a locally closed subset $Z\subset X$, Rydh defines a \emph{projection of $X/S$ adapted to} $Z$ to be a commutative diagram
\begin{equation}\label{proj1}
\begin{tikzcd}
U\arrow{r}{p}\arrow{d} & X\arrow{dd}\\
B\arrow{d} & \\
T\arrow{r}{g} & S
\end{tikzcd}
\end{equation}
where $U\rightarrow X\times_ST$ is \'etale, $B\rightarrow T$ is smooth and $p^{-1}(Z)\rightarrow B$ is finite. A relative cycle $\alpha$ on $X/S$ with support $Z\subset X$ is the datum, for every projection adapted to $Z$, of a proper family of \emph{zero}-cycles on $U/B$, which Rydh defines as a morphism
\[
\alpha_{U/B/T}:B\rightarrow \Gamma^\star(U/B)
\]
to the scheme of divided powers. We refer to \cite[Def.~$4.2$]{Rydh1} for the additional compatibility conditions that these data should satisfy.

Let now $\mathcal F$ be a flat family of coherent sheaves on $X/S$. If $\mathsf p=(U,B,T,p,g)$ denotes a projection of $X/S$ adapted to $\Supp \mathcal F\subset X$ as in \eqref{proj1}, then the \emph{zero}-cycle defining the norm family $\mathcal N_{\mathcal F}$ at $\mathsf p$ is
\[
(\mathcal N_{\mathcal F})_{U/B/T}=\mathcal N_{p^\ast \mathcal F/B},
\]
constructed in \cite[Cor.~$7.9$]{Rydh1}. For us $\mathcal F$ will always be a structure sheaf, so it will be easy to compare these zero-cycles.

\medskip
If $Z\subset X$ is a subscheme that is smooth over $S$, then the norm family $\mathcal N_Z$ is an example of a \emph{smooth} relative cycle, cf.~\cite[Def.~$8.11$]{Rydh1}.
The next result states an equivalence, in characteristic zero, between smooth relative cycles and subschemes smooth over the base.

\begin{theorem}[{\cite[Thm.~$9.8$]{Rydh1}}]\label{thm:fromrydh}
If $S$ is of characteristic zero, then for every smooth relative cycle $\alpha$ on $X/S$ there is a unique subscheme $Z\subset X$, smooth over $S$, such that $\alpha=\mathcal N_Z$.
\end{theorem}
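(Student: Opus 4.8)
The plan is to reduce the statement to the \'etale-local picture provided by adapted projections, where a smooth relative cycle becomes a disjoint sum of sections, and then to glue. Since the norm family commutes with arbitrary base change, since smoothness over $S$ is local on the source for the \'etale topology, and since subschemes equipped with descent data along \'etale covers glue, both the existence and the uniqueness of $Z$ are local questions on $X$ and on $S$. It therefore suffices, for each projection $\mathsf p=(U,B,T,p,g)$ of $X/S$ adapted to the support $Z:=\Supp\alpha$, to produce a subscheme $W_{\mathsf p}\subset U$, finite \'etale over $B$, whose norm family over $B$ equals the family of zero-cycles $\beta:=\alpha_{U/B/T}\colon B\rightarrow \Gamma^\star(U/B)$ induced by $\alpha$ at $\mathsf p$, and to check that the $W_{\mathsf p}$ are compatible enough to descend.

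Fix $\mathsf p$ and unwind the definition of \emph{smooth} relative cycle, \cite[Def.~$8.11$]{Rydh1}: in characteristic zero it forces $\beta$ to be, \'etale-locally on $B$, a sum $[\sigma_1]+\dots+[\sigma_d]$ of pairwise disjoint sections $\sigma_i\colon B\rightarrow U$ of $p$, with no repeated points and no hidden infinitesimal structure. This is the single place where characteristic zero is essential: rationally $\Gamma^\star$ coincides with $\Sym$, whereas in positive characteristic the divergence between divided powers and symmetric powers, together with Frobenius twists, is exactly what causes the statement to fail. Granting this, I would let $W_{\mathsf p}\subset U$ be the subscheme cut out by the $\sigma_i$; by \'etale descent on $B$ together with the uniqueness of a subscheme with prescribed norm family it is well defined globally on $B$, it is finite \'etale of degree $d$ over $B$, and since $B\rightarrow T$ is smooth of relative dimension $r$ while $U\rightarrow X\times_ST$ is \'etale, it is smooth of relative dimension $r$ over $T$ and locally closed in $X\times_ST$. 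By construction its norm family over $B$ is $\beta$.

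Finally I would let $\mathsf p$ vary: any two adapted projections are dominated by a common refinement, and the compatibility axioms for a relative cycle, \cite[Def.~$4.2$]{Rydh1}, say precisely that the families $\beta$ match under refinement, whence — by the uniqueness just used — so do the subschemes $W_{\mathsf p}$. This gives the descent datum that glues them to a subscheme $Z\subset X$, automatically smooth over $S$ of relative dimension $r$; the equality $\mathcal N_Z=\alpha$ is then checked projection by projection, where it holds by construction, and uniqueness is the same argument in reverse: any smooth $Z'\subset X$ with $\mathcal N_{Z'}=\alpha$ has $p^{-1}(Z')\rightarrow B$ finite with norm family $\beta$, hence $p^{-1}(Z')=W_{\mathsf p}$ for every $\mathsf p$, so $Z'=Z$. \textbf{The main obstacle} is the structural input of the middle paragraph — the characteristic-zero classification of smooth families of zero-cycles as \'etale-local disjoint sums of sections — together with the bookkeeping of how norm families transform under refinement of projections, which is what legitimizes the gluing; the reduction to the local picture and the \'etale descent at the end are routine by comparison.
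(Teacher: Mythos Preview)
The paper does not prove this statement: it is quoted verbatim as \cite[Thm.~$9.8$]{Rydh1} and used as a black box in the proof of Theorem~\ref{thm:fibre}. There is therefore no ``paper's own proof'' to compare your proposal against. Your sketch is a plausible outline of how Rydh's argument might go --- reduce to adapted projections, identify smooth zero-cycles in characteristic zero with \'etale-local sums of disjoint sections, then glue --- but to assess whether it matches the actual proof you would have to consult \cite{Rydh1} directly, not this paper.
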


We can now prove Theorem \ref{thm:fibre}. We fix $Y$ to be a smooth projective variety, $C\subset Y$ a smooth curve of genus $g$ in class $\beta$, 
and we denote by $I_n(Y,C)$ the fibre over $[C]$ of the Hilbert--Chow morphism
\[
I_{1-g+n}(Y,\beta)\rightarrow \Chow_1(Y,\beta),
\]
as in Definition \ref{def:fixedcurve}.

\begin{proof}[Proof of Theorem \ref{thm:fibre}]
We need to show the equality
\[
I_n(Y,C)=\Quot_n(\mathscr I_C)
\]
as subschemes of $I_{1-g+n}(Y,\beta)$. Let $S$ be a scheme over $\C$, and set $X=Y\times_\C S$. Then a family 
\[
Z\subset X\rightarrow S
\]
in the Hilbert scheme is an $S$-valued point of $I_n(Y,C)$ when $(\mathcal N_Z)_1=\mathcal N_{C\times S}$.
The closed immersion \eqref{closed918361} from the Quot scheme to the Hilbert scheme factors through $I_n(Y,C)$. Indeed, any $S$-point $\mathscr I_{C\times S}\twoheadrightarrow \mathscr I_{C\times S}/\mathscr I_Z$ of the Quot scheme gives a closed immersion $C\times S\hookrightarrow Z$ whose relative ideal is zero-dimensional over $S$, thus we have $(\mathcal N_Z)_1=(\mathcal N_{C\times S})_1=\mathcal N_{C\times S}$, where in the second equality we used that $\mathcal N_{C\times S}$ is equidimensional of dimension one over $S$. So we obtain a closed immersion 
\[
\iota:\Quot_n(\mathscr I_C)\hookrightarrow I_n(Y,C).
\]
For every scheme $S$, we have an injective map of sets
\[
\iota(S):\Quot_n(\mathscr I_C)(S)\hookrightarrow I_n(Y,C)(S),
\]
and since $\iota(\Spec \C)$ is a bijection, so far $\iota$ is just a bijective closed immersion. We need to show $\iota(S)$ is onto, and for the moment we deal with the case where $S$ is a fat point. In other words, assume $S$ is the spectrum of a local artinian $\C$-algebra with residue field $\C$. Let $Z\subset X\rightarrow S$ be an $S$-valued point of $I_n(Y,C)$. Consider the finite subscheme $F\subset Y\subset X$ given by the support of $\mathscr I_C/\mathscr I_{Z_0}$, where $Z_0$ is the closed fibre of $Z\rightarrow S$. Form the open set $V=X\setminus F\subset X$. Then we have, as relative cycles on $V/S$, 
\[
(\mathcal N_Z)_1\big|_V=\mathcal N_{C\times S}\big|_V=\mathcal N_{(C\times S)\cap V}.
\]
We claim the left hand side equals the relative cycle $\mathcal N_{Z\cap V}$.
For sure, these two cycles have the same support, as $Z\cap V=Z_1\cap V$, and they are determined by the same set of projections; indeed, being equidimensional of dimension one, they are determined by (compatible data of) relative zero-cycles for every projection $\mathsf p_{V/S}=(U,B,T,p,g)$ such that $B/T$ is smooth of relative dimension one. Let us focus on $(\mathcal N_Z)_1$ first. Here $r=1$ is the maximal relative dimension of a point in $Z$, so the zero-cycle corresponding to a projection $\mathsf p_{X/S}$ as in \eqref{proj1}, and adapted to $Z_1$, is the same as the one defined by the norm family of $Z$ (cf.~the proof of \cite[Prop.~$4.5$]{Rydh1}), namely $\mathcal N_{p^\ast \mathscr{O}_Z/B}$. Now we restrict to the open subset $i:V\rightarrow X$. By definition of pullback, the zero-cycle attached to a projection $\mathsf p_{V/S}$ (adapted to $Z_1\cap V$) is the cycle corresponding to the projection $(U,B,T,i\circ p,g)$ for the full family $Z/S$, namely
\[
\mathcal N_{(i\circ p)^\ast\mathscr{O}_Z/B}=\mathcal N_{p^\ast\mathscr{O}_{Z\cap V}/B}.
\]
The latter is precisely the zero-cycle defining the norm family of $Z\cap V/S$ at the same projection $\mathsf p_{V/S}$, so the claim is proved, 
\[
\mathcal N_{Z\cap V}=(\mathcal N_Z)_1\big|_V.
\]
By the equivalence between smooth cycles and smooth subschemes stated in Theorem \ref{thm:fromrydh}, we conclude that $Z\cap V$ and $(C\times S)\cap V$ are the same (smooth) family over $S$. Moreover, the closure
\[
\overline{(C\times S)\cap V}\subset Z
\]
equals $C\times S$, because the open subscheme $(C\times S)\cap V\subset C\times S$ is fibrewise dense (intersecting with $V$ is only deleting a finite number of points in the special fibre). We have thus reconstructed a closed immersion $C\times S\hookrightarrow Z$, giving a well-defined $S$-valued point of $\Quot_n(\mathscr I_C)$. So $\iota(S)$ is onto, and thus a bijection, whenever $S$ is a fat point. This implies $\iota$ is \'etale, by a simple application of the formal criterion for \'etale maps. The theorem follows because we already know $\iota$ is a bijective closed immersion. 
\end{proof}

\subsection{Local triviality of Hilbert--Chow}

In this section we prove Theorem \ref{thm:local}. The main tool used in the proof is the following local analytic version of the tubular neighborhood theorem.

\begin{lemma}\label{lemma:loctriv}
Let $S$ be a scheme, $j:X\rightarrow Y$ a closed immersion over $S$. Assume $X$ and $Y$ are both smooth over $S$, of relative dimension $d$ and $n$ respectively. Then $j$ is locally analytically isomorphic to the standard linear embedding $\C^d\times S\rightarrow \C^n\times S$. 
\end{lemma}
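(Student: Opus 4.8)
The plan is to run the classical tubular-neighborhood argument, but in the relative category over $S$ and only locally around a single point, so that the conormal bundle is trivial and no patching is required. Fix $x\in X$ with image $s\in S$. Since the assertion is local on $X$ we may freely shrink $Y$ and pass to analytifications, so it suffices to produce, near $j(x)$, an analytic $S$-isomorphism of $Y$ onto an open subset of $\C^n\times S$ carrying $X$ to the linear subspace $\C^d\times S$.

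The first step is to choose coordinates on $Y$ adapted to $X$. Write $\mathscr I\subset\mathscr O_Y$ for the ideal of $j$. Because $X\rightarrow S$ and $Y\rightarrow S$ are both smooth, the conormal sequence $0\rightarrow\mathscr I/\mathscr I^2\rightarrow j^\ast\Omega_{Y/S}\rightarrow\Omega_{X/S}\rightarrow 0$ is exact and locally split, with $j^\ast\Omega_{Y/S}$ locally free of rank $n$ and $\Omega_{X/S}$ locally free of rank $d$; hence $\mathscr I/\mathscr I^2$ is locally free of rank $n-d$ and $j$ is a regular immersion of that codimension. Using Nakayama I would pick $f_{d+1},\dots,f_n\in\mathscr O_Y$ near $j(x)$ generating $\mathscr I$ there, with classes forming an $\mathscr O_X$-basis of $\mathscr I/\mathscr I^2$, together with $f_1,\dots,f_d\in\mathscr O_Y$ near $j(x)$ whose restrictions $\bar f_1,\dots,\bar f_d$ give relative coordinates on $X/S$ near $x$, i.e.\ the $d\bar f_i$ form a basis of $\Omega_{X/S}$. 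The splitting of the conormal sequence then forces $df_1,\dots,df_n$ to be an $\mathscr O_Y$-basis of $\Omega_{Y/S}$ in a neighborhood of $j(x)$.

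Next I would assemble the $S$-morphism $\Phi=(f_1,\dots,f_n,\pi)\colon Y'\rightarrow\C^n\times S$, where $\pi\colon Y\rightarrow S$ is the structure map and $Y'\subset Y$ is the open neighborhood of $j(x)$ on which the $f_i$ are defined. Both $Y'$ and $\C^n\times S$ are smooth over $S$ of relative dimension $n$, and $\Phi$ pulls back the basis $dt_1,\dots,dt_n$ of $\Omega_{\C^n\times S/S}$ to $df_1,\dots,df_n$; since the latter is a basis of $\Omega_{Y/S}$ near $j(x)$, the map $\Phi^\ast\Omega_{\C^n\times S/S}\rightarrow\Omega_{Y/S}$ is an isomorphism there, so $\Phi$ is smooth of relative dimension $0$, i.e.\ \'etale, at $j(x)$. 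Over $\C$ an \'etale morphism restricts to an isomorphism in the analytic topology locally on the source, so $\Phi$ maps a neighborhood $W$ of $j(x)$ analytically isomorphically onto an open subset of $\C^n\times S$. Finally, $\Phi$ pulls $t_i$ back to $f_i$ and $f_{d+1},\dots,f_n$ generate $\mathscr I$ on $W$, so $\Phi^{-1}(\{t_{d+1}=\cdots=t_n=0\})=X\cap W$ scheme-theoretically; that is, $\Phi$ carries the pair $(W,X\cap W)$ isomorphically onto $(\Phi(W),\Phi(W)\cap(\C^d\times S))$, which is the claim.

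The only real content — and the step I expect to demand the most care — is the simultaneous choice of the functions $f_i$: one must control the ideal $\mathscr I$ and the relative coframe of $Y$ at the same time, which is exactly what the local splitting of the conormal sequence provides. I would also flag explicitly that the analytic topology is genuinely used in the passage from ``\'etale'' to ``local isomorphism'', since an \'etale morphism is not in general a Zariski-local isomorphism; everything preceding that step is purely algebraic.
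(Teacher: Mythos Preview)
Your proof is correct and follows essentially the same approach as the paper: both use the local splitting of the conormal sequence (the Jacobian criterion) to choose generators $f_{d+1},\dots,f_n$ of $\mathscr I$ and functions $f_1,\dots,f_d$ restricting to relative coordinates on $X/S$, so that $df_1,\dots,df_n$ trivialize $\Omega_{Y/S}$; both then package these into an \'etale $S$-map to $\A^n_S$ carrying $X$ to the linear $\A^d_S$, and invoke the inverse function theorem to upgrade \'etale to a local analytic isomorphism. The only cosmetic difference is that the paper records the induced \'etale map $X\to\A^d_S$ as a separate vertical arrow in a commutative square, whereas you verify directly that $\Phi^{-1}(\{t_{d+1}=\cdots=t_n=0\})=X$ scheme-theoretically; these are equivalent.
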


\begin{proof}
Let $x\in X$ and $y=j(x)\in Y$. Let $\mathscr I\subset \mathscr{O}_Y$ be the ideal sheaf of $X$ in $Y$.
The relative smoothness of $X$, given that of $Y$, is characterized by the Jacobian criterion \cite[Section $8.5$]{Bosch1}, asserting that the short exact sequence
\[
0\rightarrow \mathscr I/\mathscr I^2\rightarrow j^\ast\Omega_{Y/S}\rightarrow \Omega_{X/S}\rightarrow 0
\]
is split locally around $x\in X$. According to \emph{loc.~cit.}~this is also equivalent to the following: whenever we choose local sections $t_1,\dots,t_n$ and $g_1,\dots,g_N$ of $\mathscr{O}_{Y,y}$ such that $\dd t_1,\dots,\dd t_n$ constitute a free generating system for $\Omega_{Y/S,y}$ and $g_1,\dots,g_N$ generate $\mathscr I_y$, after a suitable relabeling we may assume $g_{d+1},\dots,g_n$ generate $\mathscr I$ about $y$ and 
\[
\dd t_1,\dots,\dd t_d,\dd g_{d+1},\dots,\dd g_n
\]
generate $\Omega_{Y/S}$ locally around $y$. In particular, $f_i=t_i\circ j$, for $i=1,\dots,d$, define a local system of parameters at $x$. By this choice of local basis for $\Omega_{Y/S}$ around $y$, we can find open neighborhoods $x\in U\subset X$ and $y\in V\subset Y$ fitting in a commutative diagram
\[
\begin{tikzcd}
U\arrow[hook]{r}{j}\arrow[swap]{d}{\textrm{\'et}} & 
V\arrow{d}{\textrm{\'et}} \\
\A^d_S\arrow[hook]{r} & \A^n_S
\end{tikzcd}
\]
where the vertical maps are defined by the local systems of parameters $(f_1,\dots,f_d)$ and $(t_1,\dots,t_d,g_{d+1},\dots,g_n)$ respectively, and the lower immersion is defined by sending $t_i\mapsto f_i$ for $i=1,\dots,d$ and $g_k\mapsto 0$.
Using the analytic topology, the inverse function theorem allows us to translate the \'etale maps into local analytic isomorphisms, and the statement follows.
\end{proof}

Note that Lemma \ref{lemma:loctriv} does not hold globally. For a closed immersion $X\subset Y$ of smooth complex \emph{projective} varieties, it is not true in general that one can find a global tubular neighborhood. The obstruction lies in $\Ext^1(N_{X/Y},T_X)$.

\medskip
Before the proof of Theorem \ref{thm:local}, we introduce the following notation.
If $Z\subset Y$ is a $1$-dimensional subscheme corresponding to a point in the fibre $I_n(Y,C)$ 
of Hilbert--Chow, we can attach to $Z$ its ``finite part'', the finite subset $F_Z\subset Z$ which is the support of the maximal zero-dimensional subsheaf of $\mathscr{O}_Z$, namely the quotient $\mathscr I_C/\mathscr I_Z$. 

\begin{proof}[Proof of Theorem \ref{thm:local}]
By \cite[Cor.~$12.9$]{Rydh1} the Hilbert--Chow map is a local isomorphism around normal schemes, so we may identify an open neighborhood of the cycle of $C$ in the Chow scheme with an open neighborhood $U$ of $[C]$ in the Hilbert scheme $I_{1-g}(Y,\beta)$. We then consider the Hilbert--Chow map 
\[
\mathsf h=\mathsf h_{1-g+n}:I_{1-g+n}(Y,\beta)\rightarrow \Chow_1(Y,\beta)
\]
and we fix a point in the fibre $[Z_0]\in I_n(Y,C)$. It is easy to reduce to the case where the finite part $F_0=F_{Z_0}\subset Z_0$ is confined on $C$, that is, $Z_0$ has only embedded points. 
We need to show that the Hilbert scheme is locally analytically isomorphic to $U\times I_n(Y,C)$ about $[Z_0]$. By Lemma \ref{lemma:loctriv}, the universal embedding $\mathscr C\subset Y\times U$, locally around the finite set of points $F_0\subset C\subset\mathscr C$, is locally analytically isomorphic to the embedding of the zero section $C\times U\subset C\times U\times \C^2$ of the trivial rank $2$ bundle. 
In particular we can find, in $C\times U\times \C^2$ and in $Y\times U$, analytic open neighborhoods $V$ and $V'$ of $F_0$, fitting in a commutative diagram
\[
\begin{tikzcd}
(C\times U)\cap V\isoarrow{d}\arrow[hook]{r} & V\arrow[hook]{r}{\textrm{open}}\isoarrow{d} & C\times U\times \C^2\\
\mathscr C\cap V'\arrow[hook]{r} & V'\arrow[hook]{r}{\textrm{open}} & Y\times U
\end{tikzcd}
\]
where the vertical maps are analytic isomorphisms. Now consider the open subset 
\[
A=\big\{(Z,u)\in I_n(Y,C)\times U\,\big|\,F_Z\subset V_u\big\}\subset I_n(Y,C)\times U.
\]
Letting $\varphi$ denote the isomorphism $V\,\widetilde{\rightarrow}\,V'$, given a pair $(Z,u)\in A$ we can look at $Z'=\mathscr C_u\cup \varphi(F_Z)$, which is a new subscheme of $Y$, mapping to $u$ under Hilbert--Chow.
The association $(Z,u)\mapsto Z'$ defines an isomorphism between $A$ and the open subset $B\subset \mathsf h^{-1}(U)$ parametrizing subschemes $Z'\subset Y$ such that $F_{Z'}$ is contained in $V'_u$, where $u$ is the image of $[Z']$ under Hilbert--Chow. Note that $[Z_0]\in B$ corresponds to $(Z_0,C)\in A$ under this isomorphism. The theorem is proved.
\end{proof}

\section{The DT theory of an Abel--Jacobi curve}\label{sec:ajcurves}
In this section we fix a non-hyperelliptic curve $C$ of genus $3$, embedded in its Jacobian 
\[
Y=(\Jac C,\Theta)
\]
via an Abel--Jacobi map. We let $\beta=[C]\in H_2(Y,\Z)$ be the corresponding curve class. For $n\geq 0$, we let 
\[
\mathcal H^n_C\subset I_{n-2}(Y,\beta)
\]
be the component of the Hilbert scheme parametrizing subschemes $Z\subset Y$ whose fundamental cycle is algebraically equivalent to $[C]$.

Let $-1:Y\rightarrow Y$ be the automorphism $y\mapsto -y$, and let $-C$ denote the image of $C$. As $C$ is non-hyperelliptic, the cycle of $C$ is not algebraically equivalent to the cycle of $-C$ \cite{Ceresa1}. 
The Hilbert scheme  $I_{n-2}(Y,\beta)$ consists of two connected components, which are interchanged by $-1$. Moreover, the Abel--Jacobi embedding $C\subset Y$ has 
unobstructed deformations, and there is an isomorphism $Y\,\widetilde{\rightarrow}\,\mathcal H^0_C$ given by translations \cite{LangeSernesi}.

\begin{example}
As remarked in \cite[Example $2.3$]{Gulb3}, the morphism
\[
\mathcal H^1_C\rightarrow \mathcal H^0_C\times Y
\]
sending $T_x(C)\cup y\mapsto (T_x(C),y)$, where $T_x$ denotes translation by $x$, is the Albanese map.
It can be easily checked that $\mathcal H^1_C$ is isomorphic to the blow-up
\[
\Bl_{\mathcal U}(\mathcal H^0_C\times Y),
\]
where $\mathcal U$ is the universal family. In particular, $\mathcal H^1_C$ is smooth of dimension $6$.
\end{example}

The quotient of the Hilbert scheme by the translation action of $Y$ gives a Deligne--Mumford stack $I_{m}(Y,\beta)/Y$. 
In fact, since the $Y$-action is free, this is an algebraic space. The \emph{reduced} Donaldson--Thomas invariants
\[
\DDT^Y_{m,\beta}=\int_{I_{m}(Y,\beta)/Y}\nu\,\dd\chi\in\Q
\]
were introduced in \cite{BOPY16} for arbitrary abelian threefolds. We consider their generating function
\[
\DDT_\beta(p)=\sum_{m\in\Z}\DDT^Y_{m,\beta}p^{m}.
\]
We state the following result as a corollary of Theorem \ref{thm:fibre}.

\begin{corollary}\label{cor:dtabeljacobi}
Let $C\subset Y$ be non-hyperelliptic, embedded in class $\beta$. Then
\[
\DDT_\beta(p)=2p^{-2}(1+p)^4.
\]
\end{corollary}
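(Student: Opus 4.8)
The plan is to reduce the computation of the reduced generating function $\DDT_\beta(p)$ to a weighted Euler characteristic of Quot schemes, exactly as in the proof of Theorem \ref{thm:dtpt}, but now carried out on the quotient stack $I_{m}(Y,\beta)/Y$. First I would observe that the translation action of $Y=\Jac C$ on the Hilbert scheme is free, so that $I_{m}(Y,\beta)/Y$ is an algebraic space and one has a principal $Y$-bundle $I_m(Y,\beta)\to I_m(Y,\beta)/Y$. Since $C\subset Y$ is an Abel--Jacobi curve it has unobstructed deformations and $\mathcal H^0_C\cong Y$ via translations; hence the two connected components of $I_{m}(Y,\beta)$ have reduced counterparts $\mathcal H^m_C/Y$ and $(-\mathcal H^m_C)/Y$, and the automorphism $-1:Y\to Y$ interchanges them and preserves the Behrend function, so $\DDT^Y_{m,\beta}=2\int_{\mathcal H^m_C/Y}\nu\,\dd\chi$.

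Next I would apply the local structure theory. Every subscheme $Z\subset Y$ appearing in $\mathcal H^{n}_C$ (writing $m=n-2=1-g+n$ with $g=3$) has maximal Cohen--Macaulay subcurve a translate of $C$, which is smooth. By Theorem \ref{thm:local}, $\mathcal H^{n}_C$ is locally analytically a product $I_n(Y,C')\times\Chow_1(Y,\beta)$ near the fibre over each cycle $[C']$; combined with Theorem \ref{thm:fibre} the fibre $I_n(Y,C')$ is the Quot scheme $\Quot_n(\mathscr I_{C'})$. Using the translation action to identify all these Quot schemes with $\Quot_n(\mathscr I_C)$, the map $\mathcal H^n_C/Y\to\mathcal H^0_C/Y=\mathrm{pt}$ exhibits $\mathcal H^n_C/Y$ as (étale-)locally $\Quot_n(\mathscr I_C)$; more precisely the Behrend function multiplicativity $\nu_I|_{I_n(Y,C')}=\nu(\mathscr I_{C'})\cdot\nu_{\Quot_n(\mathscr I_C)}$ from the proof of Theorem \ref{thm:dtpt} descends, so
\[
\int_{\mathcal H^n_C/Y}\nu\,\dd\chi=\nu(\mathscr I_C)\cdot\tilde\chi\big(\Quot_n(\mathscr I_C)\big).
\]
Then $\DDT^Y_{n-2,\beta}=2\,\nu(\mathscr I_C)\cdot\tilde\chi(\Quot_n(\mathscr I_C))$, and as recalled after the proof of Theorem \ref{thm:dtpt}, the identity $\nu(\mathscr I_C)\cdot\tilde\chi(\Quot_n(\mathscr I_C))=n_{g,C}\cdot(-1)^n\chi(I_n(Y,C))=\PPT_{n,C}$-type term means $\sum_n\nu(\mathscr I_C)\tilde\chi(\Quot_n(\mathscr I_C))q^{1-g+n}=\DDT_0(q)\PPT_C(q)$ with $\PPT_C(q)=n_{g,C}\,q^{1-g}(1+q)^{2g-2}$.

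Finally I would compute the constant. For the Abel--Jacobi curve $C$ inside the abelian threefold $Y$, the BPS number is $n_{g,C}=\nu(\mathscr I_C)=1$, since $C$ moves in a smooth family $\cong Y$ of the expected dimension and $\mathcal H^0_C$ is smooth (so the Behrend function equals $(-1)^{\dim}$, contributing $1$ after the sign bookkeeping built into $\nu(\mathscr I_C)=n_{g,C}$). One then needs that the MacMahon-type factor $\DDT_0$ drops out of the \emph{reduced} theory: quotienting by the free $Y$-action removes the contribution of roaming points (equivalently, in the reduced theory one is left only with the stable-pairs factor), so $\DDT_\beta(p)=2\cdot\PPT_C(p)=2\,n_{g,C}\,p^{1-g}(1+p)^{2g-2}=2p^{-2}(1+p)^{4}$ for $g=3$. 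The main obstacle I anticipate is making rigorous this last point — justifying that passing to the quotient by $Y$ exactly strips off the $\DDT_0(q)=M(-q)^{\chi(Y)}$ factor (note $\chi(Y)=0$ here, so in fact $\DDT_0(q)\equiv 1$ and $M(-q)^0=1$, which makes the reduction clean) — and carefully checking that the local-analytic trivializations of Theorem \ref{thm:local} are compatible with the $Y$-action so that the Behrend-function identity genuinely descends to the algebraic space $I_{n-2}(Y,\beta)/Y$; once $\chi(Y)=0$ is used the combinatorics collapses and the factor $(1+p)^4$ with prefactor $2$ and shift $p^{-2}$ follows directly from $\PPT_C(p)=q^{1-g}(1+q)^{2g-2}$ at $g=3$.
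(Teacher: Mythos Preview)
Your strategy is close to the paper's, but you detour through Theorem~\ref{thm:local} and a Behrend-function descent that introduces two sign errors which happen to cancel. The paper's argument is more direct: since $\mathcal H^0_C\cong Y$ via translations, the free $Y$-action trivializes the Hilbert--Chow fibration \emph{globally}, so $(y,Z)\mapsto T_y(Z)$ gives an isomorphism of schemes $Y\times\Quot_n(\mathscr I_C)\,\widetilde{\rightarrow}\,\mathcal H^n_C$ (Theorem~\ref{thm:fibre} identifies the fibre). Hence $\mathcal H^n_C/Y\cong\Quot_n(\mathscr I_C)$ as a scheme, its Behrend function is literally $\nu_{\Quot_n(\mathscr I_C)}$, and $\DDT^Y_{n-2,\beta}=2\,\tilde\chi(\Quot_n(\mathscr I_C))$ with no extra factor. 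One then sums directly using \cite[Prop.~$5.1$]{LocalDT}; the passage through $\DDT_0$ and $\PPT_C$ is unnecessary.

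Your descent step is where things go wrong. The identity $\nu_I|_{I_n(Y,C')}=\nu(\mathscr I_{C'})\cdot\nu_{\Quot_n(\mathscr I_C)}$ concerns the Behrend function of the \emph{ambient} Hilbert scheme, not of the quotient $\mathcal H^n_C/Y$. The quotient map is smooth of relative dimension $3$, so $\nu_{\mathcal H^n_C}=(-1)^3\pi^\ast\nu_{\mathcal H^n_C/Y}$; comparing along a fibre gives $\nu_{\mathcal H^n_C/Y}=-\nu(\mathscr I_C)\cdot\nu_{\Quot_n(\mathscr I_C)}$, not $\nu(\mathscr I_C)\cdot\nu_{\Quot_n(\mathscr I_C)}$. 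Separately, your value $\nu(\mathscr I_C)=1$ is wrong: $\mathcal H^0_C\cong Y$ is smooth of dimension $3$, so $\nu(\mathscr I_C)=(-1)^3=-1$, and the paper records $n_{3,C}=\nu(\mathscr I_C)=-1$ explicitly. The two errors cancel, so your final formula is correct, but the argument as written is not. You were right to flag the descent as the main obstacle.
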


\begin{proof}
As the Hilbert--Chow morphism is an isomorphism around normal schemes, we have an isomorphism
\[
I_{-2}(Y,\beta)\,\widetilde{\rightarrow}\,\Chow_1(Y,\beta).
\]
On the other hand, the Hilbert scheme is the disjoint union of two copies of $\mathcal H^0_C$, where $\mathcal H^0_C\cong Y$ because $C$ is not hyperelliptic. Focusing on the component parametrizing translates of $C$, the Hilbert--Chow morphism $\mathcal H^n_C\rightarrow \mathcal H^0_C$ induces an isomorphism 
\[
Y\times \Quot_n(\mathscr I_C)\,\widetilde{\rightarrow}\, \mathcal H^n_C
\]
by Theorem \ref{thm:fibre}. This shows that the quotient space $\mathcal H^n_C/Y$ is isomorphic to the Quot 
\emph{scheme} $\Quot_n(\mathscr I_C)$. Keeping into account the second component 
of $I_{n-2}(Y,\beta)$, still isomorphic to $\mathcal H^n_C$, we find 
\[
\DDT^Y_{n-2,\beta}=2\cdot \tilde\chi(\Quot_n(\mathscr I_C)),
\]
where $\tilde\chi$ denotes the Behrend weighted Euler characteristic. 
Then 
\[
\DDT_\beta(p)=\sum_{n\geq 0}\DDT^Y_{n-2,\beta}p^{n-2}=2p^{-2}\sum_{n\geq 0}\tilde\chi(\Quot_n(\mathscr I_C))p^n=2p^{-2}(1+p)^4,
\]
where the last equality follows from \cite[Prop.~$5.1$]{LocalDT}.
\end{proof}

If one considers homology classes of type $(1,1,d)$ for all $d\geq 0$, on an arbitrary abelian threefold $Y$, one has the formula
\begin{equation}\label{conj:dtabel}
\sum_{d\geq 0}\sum_{m\in\mathbb Z}\DDT^Y_{m,(1,1,d)}(-p)^mq^d=-K(p,q)^2,
\end{equation}
where $K$ is the Jacobi theta function
\[
K(p,q)=(p^{1/2}-p^{-1/2})\prod_{m\geq 1}\frac{(1-pq^m)(1-p^{-1}q^m)}{(1-q^m)^2}.
\]
Relation \eqref{conj:dtabel} was conjectured in \cite{BOPY16} and proved in \cite{Ob2,Ob3}.
Corollary \ref{cor:dtabeljacobi} confirms the coefficient of $q$ via Quot schemes, when $Y$ is the Jacobian of a general curve. Indeed, in this case the Abel--Jacobi class is of type $(1,1,1)$.

\medskip
The \emph{local} DT theory of a general Abel--Jacobi curve $C$ of genus $3$ is determined as follows. 
Using again the isomorphism $Y\cong\mathcal H^0_C$, we can compute the BPS number
\[
n_{3,C}=\nu(\mathscr I_C)=-1,
\]
thus the DT/PT correspondence at $C$ (Theorem \ref{thm:dtpt}) yields
\[
\DDT_C(q)=\PPT_C(q)=-q^{-2}(1+q)^4.
\]
In other words, the global theory is related to the local one by
\[
\DDT_\beta(q)=-2\cdot \DDT_C(q).
\]

\medskip
\textbf{Acknowledgements}.
I would like to thank my advisors Martin G.~Gulbrandsen and Lars H.~Halle, and Letterio Gatto, for many valuable discussions. Thanks to David Rydh for sharing many useful comments and a key example that was useful to understand the norm family. Finally, I wish to thank Richard Thomas for patiently sharing his insights and commenting on a previous draft of this work.

\bibliographystyle{spmpsci}      
\bibliography{bib}   

\end{document}